\documentclass[11pt]{amsart}
\marginparwidth  0pt  \marginparsep 0pt \oddsidemargin  -0.1in
\evensidemargin  0pt \topmargin  -.3in \textwidth  6.5in
\textheight 9in
\usepackage{amsmath,amsthm,amsfonts,amssymb,latexsym,epsfig}

\newtheorem{theorem}{Theorem}[section]

\newtheorem{lemma}{Lemma}[section]

\numberwithin{equation}{section}

\theoremstyle{definition}

\theoremstyle{remark}

\begin{document}
\title{On a result of Cartwright and Field}
\author{Peng Gao}
\address{Department of Mathematics, School of Mathematics and System Sciences, Beijing University of Aeronautics and Astronautics, P. R. China}
\email{penggao@buaa.edu.cn}
%%\date{\today}
%%\date{September 10, 2007.}
\subjclass{Primary 26D15}\keywords{Power means}
\thanks{The author is supported in part by NSFC grant 11371043.}

%%-------------------------------------------------------------------
\begin{abstract}  Let $M_{n,r}=(\sum_{i=1}^{n}q_ix_i^r)^{\frac {1}{r}}, r\neq 0$ and $M_{n,0}=\displaystyle \lim_{r \rightarrow 0}M_{n,r}$ be the weighted power means of $n$ non-negative numbers $x_i, 1 \leq i \leq n$ with $q_i > 0$ satisfying $\sum^n_{i=1}q_i=1$. Let $r>s$, a result of Cartwright and Field shows that when $r=1, s=0$,
\begin{align*}
    \frac {r-s}{2x_n}\sigma_n \leq M_{n,r}-M_{n,s} \leq \frac {r-s}{2x_1} \sigma_n,
\end{align*}
  where $x_1=\min \{ x_i \}, x_n=\max \{ x_i \}, \sigma_n=\sum_{i=1}^{n}q_i(x_i-M_{n,1})^2$.
In this paper, we determine all the pairs $(r,s)$ such that the right-hand side inequality above holds and all the pairs $(r,s), -1/2 \leq s \leq 1$ such that the left-hand side inequality above holds.
\end{abstract}

\maketitle
%%-----------------------------------------------------------------------
\section{Introduction}
%%------------------------------------------------------------------------

   Let $M_{n,r}({\bf x}; {\bf q})$ be the weighted power means:
   $M_{n,r}({\bf x}; {\bf q})=(\sum_{i=1}^{n}q_ix_i^r)^{\frac {1}{r}}$, where
   $M_{n,0}({\bf x}; {\bf q})$ denotes the limit of $M_{n,r}({\bf x}; {\bf q})$ as
   $r\rightarrow 0$, ${\bf x}=(x_1, \ldots,
   x_n)$, ${\bf q}=(q_1, \ldots,
   q_n)$ with $x_i \geq 0, q_i>0$ for all $1 \leq i \leq n$ and $\sum_{i=1}^nq_i=1$.  We further define $A_n({\bf x};{\bf q})=M_{n,1}({\bf x};{\bf q}), G_n({\bf x};{\bf q})=M_{n,0}({\bf x};{\bf q}), \sigma_n=\sum_{i=1}^{n}q_i(x_i-A_n)^2$. We shall write $M_{n,r}$ for $M_{n,r}({\bf x};{\bf q})$
   and similarly for other means when there is no risk of
   confusion.

   The following elegant refinement of the well-known arithmetic-geometric mean inequality is given by Cartwright and Field in \cite{C&F} :
\begin{align}
\label{1.5}
   \frac {\sigma_n}{2x_n}
    \leq   A_n-G_n  \leq  \frac {\sigma_n}{2x_1}.
\end{align}

   Naturally, one considers the following generalization of \eqref{1.5} on bounds for the differences of means:
\begin{align}
\label{1.5'}
    \frac {r-s}{2x_n}\sigma_n \leq M_{n,r}-M_{n,s} \leq \frac {r-s}{2x_1} \sigma_n , \quad r>s.
\end{align}

    It is shown in \cite[Theorem 3.2]{G4} that when $r = 1$ (resp. $s=1$), inequalities \eqref{1.5'} hold if and only if $-1 \leq s <1$ (resp. $1<r \leq 2$).
    Moreover, it is shown in \cite{G4} that the constant $(r-s)/2$ is best possible when either inequality in \eqref{1.5'} is valid. However, neither inequality in \eqref{1.5'} is valid for all $r,s$ and a necessary condition on $r, s$ such that either inequality of \eqref{1.5'} is valid is given in Lemma \ref{lem1} in Section \ref{sec 2}.

    In this paper, we determine all the pairs $(r,s)$ such that the right-hand side inequality of \eqref{1.5'} holds and on all the pairs $(r,s), -1/2 \leq s \leq 1$ such that the left-hand side inequality of \eqref{1.5'} holds. We prove in Section \ref{sec 3} the following
\begin{theorem}
\label{thm2}
  Let $r>s$ and $x_1=\min \{ x_i \}, x_n=\max \{ x_i \}$. The right-hand side inequality of \eqref{1.5'} holds if and only if $0 \leq r+s \leq 3$, $r \leq 2, s \geq -1$. When $-1/2 \leq s \leq 1$, the left-hand side inequality of \eqref{1.5'} holds if and only if $0 \leq r+s \leq 3$, $r \geq 1$.
  Moreover, in all these cases we have equality holding if and only if $x_1=x_2=\cdots=x_n$.
\end{theorem}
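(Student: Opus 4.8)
\emph{Overview and necessity.} The plan is to treat necessity and sufficiency separately, and for sufficiency to split the parameter range according to whether $1$ lies between $s$ and $r$. That $0\le r+s\le 3$ is needed is Lemma~\ref{lem1}; one also sees it by a local expansion: writing $x_i=1+\epsilon u_i$ with $\sum q_iu_i=0$ and expanding in $\epsilon$, the second-order parts of $M_{n,r}-M_{n,s}$ and of $\tfrac{r-s}{2x_1}\sigma_n$ (resp. $\tfrac{r-s}{2x_n}\sigma_n$) both equal $\tfrac{r-s}{2}\epsilon^2\sum q_iu_i^2$, while the third-order parts force, for every admissible $u$,
\begin{equation*}
3u_{(1)}\sum q_iu_i^2+(r+s-3)\sum q_iu_i^3\le 0\qquad\bigl(\text{resp. }3u_{(n)}\sum q_iu_i^2+(r+s-3)\sum q_iu_i^3\ge 0\bigr),
\end{equation*}
where $u_{(1)},u_{(n)}$ are the least and greatest $u_i$; a two-valued $u$ with the smaller weight tending to $0$ or to $1$ then yields $0\le r+s\le 3$. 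The extra conditions come from $n=2$ with an extreme weight: letting one value be fixed and the other tend to $0$ or to $\infty$ and comparing leading powers, the right-hand inequality breaks once $r>2$ or $s<-1$, and the left-hand inequality breaks once $r<1$.

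\emph{Sufficiency for $s\le 1\le r$.} Write $M_{n,r}-M_{n,s}=(M_{n,r}-A_n)+(A_n-M_{n,s})$. By \cite[Theorem~3.2]{G4}, for $1\le r\le 2$ one has $\tfrac{r-1}{2x_n}\sigma_n\le M_{n,r}-A_n\le \tfrac{r-1}{2x_1}\sigma_n$ and for $-1\le s\le 1$ one has $\tfrac{1-s}{2x_n}\sigma_n\le A_n-M_{n,s}\le \tfrac{1-s}{2x_1}\sigma_n$; adding the upper (resp. lower) pair gives the right-hand (resp. left-hand) inequality of \eqref{1.5'} whenever $-1\le s\le 1\le r\le 2$. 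Since this range already forces $0\le r+s\le 3$, this covers the left-hand inequality for all $-1/2\le s\le 1\le r\le 2$ and the right-hand inequality for all admissible pairs with $s\le 1\le r$.

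\emph{Sufficiency on the remaining pairs.} What is left are, for the right-hand inequality, the two ``same-side'' regions $-1\le s<r<1$ with $r+s\ge 0$ and $1<s<r\le 2$ with $r+s\le 3$, and, for the left-hand inequality, the region $2<r$, $-1/2\le s\le 1$, $r+s\le 3$; here the splitting through $1$ is of no use, since for $s>1$ on the line $r+s=3$, or whenever $r>2$, there is no intermediate exponent $t$ for which both sub-pairs are admissible. The approach I would take is: (i) normalize $A_n=1$, so that $\sigma_n=\sum q_ix_i^2-1$ and $x_1\le 1\le x_n$; (ii) by a smoothing argument — fix all but one of the $x_i$ and use convexity in the free variable to push it to an endpoint of $[x_1,x_n]$ — reduce to the case of a two-point distribution; (iii) for two values, taken to be $1$ and $t$ with weight $p$, reduce each inequality to a one-variable statement $F(t)\ge0$ with $F(1)=0$ and establish it by analysing the sign of $F'$ (and, where necessary, $F''$), the bounds $r\le 2$, $s\ge -1$, $0\le r+s\le 3$ (resp. $r\ge 1$, $-1/2\le s\le 1$) being exactly what is needed for that sign to come out right; a final reduction of $p$ to its endpoints isolates the worst configuration.

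\emph{Main obstacle and equality.} The hard part is step (iii): the one-variable inequality for $n=2$ is genuinely delicate, and the point is to show that the constants $2$, $-1$ and $3$ (and the restriction $s\ge-1/2$ on the left) mark precisely the parameter values at which monotonicity of $F$ is lost. Step (ii) is routine but the required convexity in the free variable must be checked on each region and strictness tracked throughout. Finally, for equality: every building block borrowed from \cite{G4} and the $n=2$ analysis are strict unless the two values coincide, and the smoothing step then forces any equality case for general $n$ to have $x_1=x_2=\cdots=x_n$; conversely equality plainly holds when all $x_i$ are equal, which completes the proof.
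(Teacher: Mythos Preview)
Your treatment of necessity and of the range $-1\le s\le 1\le r\le 2$ matches the paper (this is Lemma~\ref{lem0} together with Lemma~\ref{lem1}), so those parts are fine. The two substantive gaps are in your steps (ii) and (iii).

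\emph{Reduction to $n=2$.} You assert that one can smooth an interior $x_i$ to an endpoint of $[x_1,x_n]$ ``by convexity in the free variable''. This convexity is not routine and, as far as I can see, is not true in general: for the right-hand inequality the relevant function is $x_i\mapsto M_{n,r}-M_{n,s}-\tfrac{r-s}{2x_1}\sigma_n$, and already $-\tfrac{r-s}{2x_1}\sigma_n$ is \emph{concave} in $x_i$, while $M_{n,r}-M_{n,s}$ has no definite sign of its second derivative across your ``same-side'' regions (for $r<1$ the map $x_i\mapsto M_{n,r}$ is not even convex for all weights). The paper does not argue this way. It first passes to the derivative $F_0=\partial F/(q_n\partial x_n)$ and reduces the inequality to $F_0\le 0$; then, to show that the extremum of $F_0$ over the simplex cannot occur with $n\ge 3$, it combines the Lagrange conditions in $q_i$ with the interior critical equations in $x_i$: the $x'_i$ are roots of a polynomial-type equation $f_2=0$, the interior $x'_i$ are also roots of $f_1=f_2'=0$, and a mean-value/root-counting contradiction forces $n=2$. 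This argument is short but essential, and your proposal has nothing that plays its role.

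\emph{The two-point inequality.} Your step (iii) is where almost the entire paper lives, and ``analyse the sign of $F'$ and $F''$'' is not a plan. Concretely, after the reduction the paper studies $F_2(x,q)=(r-1)(q+(1-q)x^{-r})^{(1-2r)/r}x^{-r-1}+(1-s)(q+(1-q)x^{-s})^{(1-2s)/s}x^{-s-1}-(r-s)$; the unifying device is to apply the weighted AM--GM inequality (with weights $(r-1)/(r-s)$ and $(1-s)/(r-s)$, which may be negative) to $F_2/(r-s)+1$, turning the problem into bounding a single product of powers. For the right-hand inequality this product is then handled by five separate cases according to the positions of $r,s$ relative to $0,\tfrac12,1$. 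For the left-hand inequality with $2<r\le 3-s$, the AM--GM step goes the other way and one must additionally control the location of the interior critical points of $F_2$; when $-1/2\le s<0$ this requires the auxiliary estimates of Lemmas~\ref{lem3}--\ref{lem2} (four subcases, with explicit numerical constants such as $\alpha_1=-0.0889s$ entering). The threshold $s\ge -1/2$ is not an artifact of the method: it is exactly where those auxiliary bounds $c_i(r,s)\le 0$ can be verified. Your outline contains none of these mechanisms, and in particular gives no indication of how the restriction $s\ge -1/2$ arises in the sufficiency argument rather than merely in necessity.
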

%%----------------------------------------------------------------------------
\section{Lemmas}
\label{sec 2} \setcounter{equation}{0}
%%----------------------------------------------------------------------------
   Our first lemma gathers known results on inequalities \eqref{1.5'}.
\begin{lemma}
\label{lem0}
  Let $r>s$ and $x_1=\min \{ x_i \}, x_n=\max \{ x_i \}$. Both inequalities in \eqref{1.5'} hold when $1 \leq r \leq 2$, $-1 \leq s \leq 1$.
  The right-hand side inequality of \eqref{1.5'} holds for $s=0$ if and only if $0< r \leq 2$, the left-hand side inequality of \eqref{1.5'} holds for $s=0$ if and only if $1 \leq r \leq 3$. Moreover, in all these cases we have equality holding if and only if $x_1=x_2=\cdots=x_n$.
\end{lemma}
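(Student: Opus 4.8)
The engine of the argument is a simple additivity (telescoping) principle. Because the bounding quantities $\sigma_n/(2x_1)$ and $\sigma_n/(2x_n)$ do not depend on the exponents, the right-hand inequality of \eqref{1.5'} is preserved under concatenation: if $r>t>s$ and the right-hand inequality holds for $(r,t)$ and for $(t,s)$, then adding $M_{n,r}-M_{n,t}\le \frac{r-t}{2x_1}\sigma_n$ and $M_{n,t}-M_{n,s}\le \frac{t-s}{2x_1}\sigma_n$ gives $M_{n,r}-M_{n,s}\le \frac{r-s}{2x_1}\sigma_n$, with equality forcing equality in each summand; the same holds verbatim for the left-hand inequality with $x_1$ replaced by $x_n$. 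This reduces every case to known ``building blocks'' lying on the coordinate lines $r=1$ and $s=1$, for which \cite[Theorem 3.2]{G4} (recalled in the Introduction) already supplies the sharp ranges and the equality characterisation.

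For the rectangle $1\le r\le 2$, $-1\le s\le 1$, observe that $r>s$ together with $r\ge 1$ and $s\le 1$ forces $s\le 1\le r$, so I would subdivide at $t=1$. When $r>1$ and $s<1$, the pair $(r,1)$ is covered by the $s=1$ line (valid for $1<r\le 2$) and the pair $(1,s)$ by the $r=1$ line (valid for $-1\le s<1$); concatenating yields both inequalities of \eqref{1.5'} for $(r,s)$. The boundary cases $r=1$ and $s=1$ are immediate from the respective lines, while $r=s=1$ is excluded. Tracking equality through the concatenation shows it holds exactly when $x_1=\cdots=x_n$, since this is already the equality condition for each block.

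Specialising $s=0$, the inner range $1\le r\le 2$ of both statements is an immediate consequence of the rectangle just established. This leaves the two ranges lying strictly outside the rectangle, namely $0<r<1$ for the right-hand inequality and $2<r\le 3$ for the left-hand one, together with the sharpness at the endpoints. These extra ranges cannot be reached by telescoping from the lines $r=1$ and $s=1$ (any subdivision of $[0,r]$ with $r<1$ avoids the point $1$, and for $2<r\le 3$ a block of the form $(r,2)$ is simply unavailable), so they are precisely the content of the known $s=0$ comparison theorems for $M_{n,r}$ against $G_n$, the natural $M_{n,r}$-versions of the Cartwright--Field refinement \eqref{1.5} of \cite{C&F}, which I would invoke directly. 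For the ``only if'' directions I would combine the necessary condition of Lemma \ref{lem1}, which rules out the large exponents, with the sharpness contained in those $s=0$ results, giving failure once $r>2$ (right-hand) and once $r>3$ or $r<1$ (left-hand); the constraint $r>0$ is automatic from $r>s=0$.

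The main obstacle is exactly this pair of ranges not covered by the rectangle, $0<r<1$ (right-hand) and $2<r\le 3$ (left-hand) at $s=0$, along with sharpness at $r=2$ and $r=3$. They resist the telescoping reduction and instead demand a direct analysis of the one-parameter defect $\psi(r)=M_{n,r}-G_n-\frac{r}{2x_1}\sigma_n$ (and its left-hand analogue): one checks $\psi(0)=0$ and controls the sign of $\psi$ via the derivative of $M_{n,r}$ in $r$, or equivalently reduces to the two-point case and studies the resulting single-variable inequality. Establishing these endpoint-sharp $s=0$ estimates, and confirming that equality persists only at $x_1=\cdots=x_n$ throughout, is the delicate part, whereas the rectangle and all concatenations are routine once the additivity principle is in place.
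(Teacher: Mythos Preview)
Your telescoping reduction for the rectangle $1\le r\le 2$, $-1\le s\le 1$ is exactly the paper's argument: split at $t=1$ and appeal to \cite[Theorem~3.2]{G4} on the two coordinate lines. For the $s=0$ assertions the paper does not attempt any further reduction; it simply cites \cite[Theorem~2]{G2016}, which is the ``known $s=0$ comparison theorem'' you allude to and already contains both the ``if'' and the ``only if'' together with the equality characterisation. So your overall strategy matches the paper's.

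One small slip: you propose to use Lemma~\ref{lem1} for the ``only if'' direction at $s=0$, but that lemma is stated under the hypothesis $r>s\neq 0$, so it is not available here. This does no real damage, since the $s=0$ result you are invoking (namely \cite[Theorem~2]{G2016}) is already an if-and-only-if statement and supplies the necessity on its own; just drop the appeal to Lemma~\ref{lem1} in that sentence.
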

\begin{proof}
    As it is shown in \cite[Theorem 3.2]{G4} that both inequalities in \eqref{1.5'} are valid when $-1 \leq s <1=r$ and $s=1 < r \leq 2$, the first assertion of the lemma follows from the observation that when either inequality in \eqref{1.5'} is valid for $r>r'$ and $r'>s$, then it is valid for $r>s$. The second assertion of the lemma is \cite[Theorem 2]{G2016}. The cases for equalities also follow from
    \cite[Theorem 3.2]{G4} and \cite[Theorem 2]{G2016}.
\end{proof}

   We define
\begin{align}
\label{2.0}
   F(x_1, \cdots, x_n, q_1, \cdots, q_n)=M_{n,r}-M_{n,s}-\frac {r-s}2\sigma_n.
\end{align}

   It is easy to see that the right-hand side inequality of \eqref{1.5'} is equivalent to $F \leq 0$ for $1=x_1<x_2<\cdots <x_{n-1}<x_n$ and the left-hand side inequality of \eqref{1.5'} is equivalent to $F \geq 0$ for $0<x_1<x_2<\cdots <x_{n-1}<x_n =1 $. We expect the extreme values of $F$ to occur at $n=2$ with one of the $x_i$ or $q_i$ taking a boundary value. Based on this consideration, we prove the following necessary condition for inequalities \eqref{1.5'} to hold.
\begin{lemma}
\label{lem1}
   Let $r>s \neq 0$. A necessary condition for the right-hand side inequality of \eqref{1.5'} to hold is that $0 \leq r+s \leq 3$, $r \leq 2, s \geq -1$.
   A necessary condition for the left-hand side inequality of \eqref{1.5'} to hold is that $0 \leq r+s \leq 3$, $r \geq 1, rs \leq 2$ and
\begin{align}
\label{2.2}
   \frac {r-s}{2} \leq (2-\frac 1r)^{2-\frac 1r}(1-\frac 1r)^{-(1-\frac 1r)},
\end{align}
   when $s<0$, where we define $0^0=1$.
\end{lemma}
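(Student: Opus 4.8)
The plan is to use the normalization recorded just before the lemma: after scaling, the right-hand inequality of \eqref{1.5'} is equivalent to $F\le 0$ on $\{1=x_1<\dots<x_n\}$, and the left-hand inequality to $F\ge 0$ on $\{0<x_1<\dots<x_n=1\}$. Following the heuristic that the obstruction already appears at $n=2$ with a weight at the boundary, I would fix $n=2$ throughout, write $q_1=q$, $q_2=1-q$, and analyse the three degenerations $q\to 1^-$, $q\to 0^+$ and (for the left inequality) $x_1\to 0^+$. In each of the first two cases $F$ vanishes at the boundary value of $q$, so its first-order Taylor coefficient in $\delta$ (here $\delta=1-q$ or $\delta=q$) must have the appropriate sign; this reduces everything to sign conditions for explicit one-variable functions, whose behaviour at the endpoints $X=1$ (resp.\ $t=1$) and $X\to\infty$ (resp.\ $t\to 0$) yields the stated constraints.

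For the right-hand inequality set $x_1=1$, $x_2=X>1$. With $q_1=1-\delta$ a first-order computation (using $M_{2,r}^r=1+\delta(X^r-1)$ and $\sigma_2=\delta(1-\delta)(X-1)^2$) gives $F=\delta\,g(X)+O(\delta^2)$, where
\[
g(X)=\frac{X^r-1}{r}-\frac{X^s-1}{s}-\frac{r-s}{2}(X-1)^2 ,
\]
so $g(X)\le 0$ for all $X>1$ is necessary. Here $g(1)=g'(1)=g''(1)=0$ and $g'''(1)=(r-s)(r+s-3)$, so $g\le 0$ near $X=1^+$ forces $r+s\le 3$; and when $r>2$ the term $X^r/r$ makes $g(X)\to+\infty$, so $r\le 2$. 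Taking instead $q_1=\delta$ gives $F=\delta\,h(X)+O(\delta^2)$ with
\[
h(X)=\frac{X^{1-r}-X}{r}-\frac{X^{1-s}-X}{s}-\frac{r-s}{2}(X-1)^2 ;
\]
now $h(1)=h'(1)=h''(1)=0$ and $h'''(1)=-(r-s)(r+s)$, so $r+s\ge 0$, and when $s<-1$ the term $-X^{1-s}/s$ is positive of order $X^{1-s}$ with $1-s>2$, making $h(X)\to+\infty$, so $s\ge -1$. This yields $0\le r+s\le 3$, $r\le 2$, $s\ge -1$.

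For the left-hand inequality set $x_1=t\in(0,1)$, $x_2=1$. The same first-order analysis gives the necessary conditions
\[
\tilde h(t):=\frac{t^{1-r}-t}{r}-\frac{t^{1-s}-t}{s}-\frac{r-s}{2}(1-t)^2\ge 0
\]
(from $q_1=1-\delta$) and
\[
\bar g(t):=\frac{t^r-1}{r}-\frac{t^s-1}{s}-\frac{r-s}{2}(1-t)^2\ge 0
\]
(from $q_1=\delta$) for all $t\in(0,1)$. As before $\bar g'''(1)=(r-s)(r+s-3)$ gives $r+s\le 3$ and $\tilde h'''(1)=-(r-s)(r+s)$ gives $r+s\ge 0$. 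Letting $t\to 0^+$ in $\tilde h$: if $r<1$ then $t^{1-r}\to 0$ and $\tilde h(0^+)=-\frac{r-s}{2}<0$, so $r\ge 1$. Letting $t\to 0^+$ in $\bar g$ when $s>0$ gives $\bar g(0^+)=(r-s)\bigl(\frac{1}{rs}-\frac{1}{2}\bigr)\ge 0$, i.e.\ $rs\le 2$; this is automatic when $s\le 0$, since then $r>0$.

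Finally, inequality \eqref{2.2} (for $s<0$) comes from the degeneration $x_1=t\to 0^+$ at a fixed $q\in(0,1)$, still with $n=2$ and $x_2=1$. Since $r\ge 1>0$ and $s<0$ we have $M_{2,r}\to(1-q)^{1/r}$, $M_{2,s}\to 0$ and $\sigma_2\to q(1-q)$, so the left-hand inequality of \eqref{1.5'} (with $x_n=1$) forces $(1-q)^{1/r}\ge\frac{r-s}{2}q(1-q)$, i.e.\ $\frac{r-s}{2}\le(1-q)^{1/r-1}/q$, for every $q\in(0,1)$. A one-variable optimization shows $q\mapsto(1-q)^{1/r-1}/q$ attains its minimum over $(0,1)$ at $q=1/(2-\frac{1}{r})$, with value $(2-\frac{1}{r})^{2-\frac{1}{r}}(1-\frac{1}{r})^{-(1-\frac{1}{r})}$, which is exactly the right-hand side of \eqref{2.2}; for $r=1$ the infimum is $1$, recovering $s\ge -1$ and the convention $0^0=1$. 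The routine parts are the endpoint asymptotics of the explicit functions $g,h,\tilde h,\bar g$; the one step that needs the right idea is this last one — choosing the two-parameter degeneration ($t\to 0$ together with an adversarial choice of $q$) and recognizing the resulting optimal constant as the closed form in \eqref{2.2}.
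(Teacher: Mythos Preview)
Your argument is correct and follows essentially the same route as the paper: reduce to $n=2$, linearize in the weight at $q\to 0^+$ and $q\to 1^-$ to obtain the one-variable functions you call $g,h,\tilde h,\bar g$ (these are exactly the two limits the paper writes down), read off $r\le 2$, $s\ge -1$, $r\ge 1$, $rs\le 2$ from their behaviour at the endpoints, and finally take $t\to 0^+$ at a generic $q$ and optimize to get \eqref{2.2}. The only substantive difference is that the paper imports the constraint $0\le r+s\le 3$ from \cite[Lemma~3.1]{G4}, whereas you recover it internally from the third-order Taylor expansion $g'''(1)=(r-s)(r+s-3)$ and $h'''(1)=-(r-s)(r+s)$; this makes your version self-contained but is otherwise the same idea.
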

\begin{proof}
Note first that it is shown in \cite[Lemma 3.1]{G4} that a necessary condition for either inequality of \eqref{1.5'} to hold is that $0 \leq r+s \leq 3$.
   Now we let $n=2, x_1=x, x_2=1, q_1=q$ and $F$ be defined as in \eqref{2.0} to see that,
\begin{align*}
  \lim_{q \rightarrow 0^+}\frac {F(x,1, q, 1-q)}{q} &=\frac {x^r-1}{r}-\frac {x^s-1}{s}-\frac {r-s}{2}(x-1)^2, \\
   \lim_{q \rightarrow 1^-}\frac {F(x,1, q, 1-q)}{1-q} &=\frac {x-x^{1-s}}{s}-\frac {x-x^{1-r}}{r}-\frac {r-s}{2}(x-1)^2.
\end{align*}
    As the first (second) right-hand side expression above is positive when $r>2$ ($s<-1$) and $x \rightarrow +\infty$, we conclude that in order for the right-hand side inequality of \eqref{1.5'} to hold, it is necessary to have $r \leq 2$ and $s \geq -1$. Moreover, the first (second) right-hand side expression above is negative when $s>0, rs>2$ ($r<1$) and $x=0$, we then conclude that in order for the left-hand side inequality of \eqref{1.5'} to hold, it is necessary to have $r \geq 1$ and $rs \leq 2$ (note that when $s < 0$, this condition is also satisfied).

    On the other hand, when $s<0$, we have
\begin{align*}
  \lim_{x \rightarrow 0^+}F(x,1, q, 1-q) =(1-q)^{1/r}-\frac {(r-s)q(1-q)}{2}.
\end{align*}
    In order for the left-hand side inequality of \eqref{1.5'} to hold for $s<0$, the expression above needs to be non-negative. On setting $y=1-q$, we see that this is equivalent to showing
\begin{align}
\label{2.22}
    y^{1/r-1}-\frac {(r-s)(1-y)}{2}
\end{align}
    is non-negative for $0 < y \leq 1$. As \eqref{2.2} implies that $s \geq -1$ when $r=1$, a condition already obtained in the discussions above, we may further assume that $r>1$. The expression in \eqref{2.22} is minimized at $y=(2(1-1/r)/(r-s))^{1/(2-1/r)}$ as one checks that this value is in between $0$ and $1$ when $r \geq 1$. Substituting this value in \eqref{2.22}, one checks easily that it is necessary to have \eqref{2.2} in order for the expression in \eqref{2.22} to be non-negative for $0<y \leq 1$ and the assertion of the lemma now follows.
\end{proof}

    We remark here that inequality \eqref{2.2} implies that it is not possible for the left-hand side inequality of \eqref{1.5'} to hold for $r>1$ and all $s<0$. In fact, by setting $z=1-1/r$, one checks easily that the right-hand side expression in \eqref{2.2} is an increasing function of $z$, hence is maximized at $z=1$, with value $4$. It follows then from \eqref{2.2} and the condition $r+s \geq 0$ that in order for the left-hand side inequality of \eqref{1.5'} to hold, it is necessary to have $4 \geq (r-s)/2 \geq (-s-s)/2=-s$, which implies that $s \geq -4$.

\begin{lemma}
\label{lem3}
   Let $-1 \leq s<0, 0 \leq q \leq q_1 \leq 1, 0<x_0 <1, x_0 \leq x^{-s} \leq 1, $, then
\begin{align*}
   q+(1-q)x^{-s} \leq x^{\alpha_1}.
\end{align*}
   for any $\alpha_1 \geq 0$ satisfying $\alpha_1 \leq \alpha_0$, where
\begin{align*}
   \alpha_0 =\frac {-s\ln ((1-q_1)x_0+q_1)}{\ln x_0}.
\end{align*}
\end{lemma}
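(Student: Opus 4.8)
The plan is to reduce the claimed inequality to a one-variable statement via the substitution $t=x^{-s}$. Since $s<0$, the hypothesis $x_0\le x^{-s}\le 1$ says exactly that $t$ ranges over $[x_0,1]$; moreover, writing $\beta=\alpha_1/(-s)\ge 0$ we have $x^{\alpha_1}=t^{\beta}$, while the condition $0\le\alpha_1\le\alpha_0$ becomes $0\le\beta\le\beta_0$ with
\[
 \beta_0:=\frac{\alpha_0}{-s}=\frac{\ln\bigl((1-q_1)x_0+q_1\bigr)}{\ln x_0}=\log_{x_0}\bigl((1-q_1)x_0+q_1\bigr).
\]
Thus it suffices to show that $q+(1-q)t\le t^{\beta}$ for all $t\in[x_0,1]$ whenever $0\le q\le q_1\le 1$ and $0\le\beta\le\beta_0$.

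First I would observe that $\beta_0\in[0,1]$: the number $(1-q_1)x_0+q_1$ is a convex combination of $x_0$ and $1$, hence lies in $[x_0,1]$, and the logarithm to base $x_0$ of any number in $[x_0,1]$ (recall $0<x_0<1$) lies in $[0,1]$. In particular $0\le\beta\le1$, so $g(t):=t^{\beta}$ is concave on $[x_0,1]$ (constant when $\beta=0$). Let $\ell$ denote the affine function agreeing with $g$ at the endpoints $t=x_0$ and $t=1$; concavity then gives $g(t)\ge\ell(t)$ for all $t\in[x_0,1]$.

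Next I would compare the affine function $h(t):=q+(1-q)t$ with $\ell$ at the two endpoints of $[x_0,1]$. At $t=1$ we have $h(1)=1=g(1)=\ell(1)$. At $t=x_0$, the quantity $h(x_0)=x_0+q(1-x_0)$ is nondecreasing in $q$ (since $x_0<1$), so $q\le q_1$ gives $h(x_0)\le x_0+q_1(1-x_0)=(1-q_1)x_0+q_1$; on the other hand $\beta\le\beta_0$ together with $0<x_0<1$ gives $x_0^{\beta}\ge x_0^{\beta_0}=(1-q_1)x_0+q_1$, whence $h(x_0)\le x_0^{\beta}=g(x_0)=\ell(x_0)$. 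Therefore the affine function $\ell-h$ is nonnegative at both endpoints of $[x_0,1]$, hence nonnegative throughout $[x_0,1]$; combining this with $g\ge\ell$ yields $h(t)\le g(t)$ on $[x_0,1]$. Reverting the substitution proves the lemma.

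There is no genuine obstacle here beyond keeping careful track of the directions of inequalities---the factor $-s>0$ when rescaling the exponent, and, more delicately, the base $x_0<1$ when passing to and from logarithms---together with a brief check of the degenerate cases: $\beta=0$ reduces to $q+(1-q)t\le1$, which holds since $t\le1$; $q_1=0$ forces $q=0$ and $\beta_0=1$, so the claim becomes $t\le t^{\beta}$; and $q_1=1$ forces $\beta_0=0$, which is the case $\beta=0$ again. One could alternatively differentiate $t^{\beta}-q-(1-q)t$ with respect to $t$, but the chord comparison is cleaner and makes transparent why the precise exponent $\alpha_0$ is the right cutoff.
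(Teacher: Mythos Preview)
Your proof is correct and follows essentially the same approach as the paper: substitute $y=x^{-s}$, reduce to a one-variable concavity argument for $y\mapsto y^{\alpha}$ with $0\le\alpha\le1$, and check the endpoint values $y=x_0$ and $y=1$. The paper streamlines this by passing immediately to the extremal choices $q=q_1$ and $\alpha_1=\alpha_0$ (where the concave function $g(y)=y^{\alpha}-(q_1+(1-q_1)y)$ vanishes at both endpoints), whereas you carry general $q$ and $\beta$ through and compare against the chord; the underlying idea is identical.
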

\begin{proof}
   We let $y =x^{-s}$ and $\alpha=-\alpha_0/s$ so that $0 \leq \alpha <1$. It suffices to show that $g(y) \geq 0$ for $x_0 \leq y \leq 1$, where
\begin{align*}
   g(y)=y^{\alpha}-(q_1+(1-q_1)y).
\end{align*}
   It is easy to see that $g(y)$ is a concave function of $y$ and $g(1)=g( x_0) =0$,  hence the desired result follows.
\end{proof}

\begin{lemma}
\label{lem4}
   Let $-1 \leq s<0, 2< r \leq 3-s, 1-s^2 \leq (r-1)(r-2)$. Suppose that there exists a number $q_2, 1/2 \leq q_2 \leq 1$ such that
\begin{align*}
   \frac {\alpha_2}{s} :=\frac {(1-s^2)q_2}{(r-1)(r-2)(1-q_2)} \leq 1.
\end{align*}
   Then for $q_2 \leq q \leq 1$, $0<x \leq 1$,
\begin{align}
\label{2.4}
   (1-s)(q(1+s)x^s+(2-s)(1-q)) \geq x^{\alpha_2}(r-1)(-q(r+1)x^r+(r-2)(1-q)).
\end{align}
\end{lemma}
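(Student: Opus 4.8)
The plan is to discard the easy cases, use linearity in $q$ to reduce to a single value of $q$, pass to a one-variable inequality via the substitution $t=x^{-s}$, and then analyse the resulting function by elementary calculus; the real work is to bound that function at its interior minimum. Since $s<0$, we have $2-s>0$ and $1+s\ge 0$, so the left side $L$ of \eqref{2.4} is $\ge 0$, and \eqref{2.4} is automatic whenever $-q(r+1)x^r+(r-2)(1-q)\le 0$; assume this quantity is positive. Both $L$ and the right side $R$ are affine in $q$, hence so is $L-R$ on $[q_2,1]$, and at $q=1$ one has $L=(1-s^2)x^s\ge 0$ and $R=-(r^2-1)x^{\alpha_2+r}\le 0$. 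So it suffices to prove \eqref{2.4} at $q=q_2$.

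Fix $q=q_2$ and set $c_1=(1-s^2)q_2$, $c_2=(1-s)(2-s)(1-q_2)$, $c_3=(r^2-1)q_2$, $c_4=(r-1)(r-2)(1-q_2)$, and $\mu=\alpha_2/s=c_1/c_4$, $\theta=r/(-s)$, $\theta'=1-\mu+\theta$; the hypothesis $\alpha_2/s\le 1$ gives $0\le\mu\le 1$, and $\theta=r/|s|\ge r>2$. Substituting $t=x^{-s}\in(0,1]$ and multiplying by $t$ turns \eqref{2.4} into
\[
\Psi(t):=c_1+c_2t+c_3t^{\theta'}-c_4t^{1-\mu}\ \ge\ 0\qquad(0<t\le 1).
\]
If $\mu=1$, then $c_1=c_4$ and $\Psi(t)=c_2t+c_3t^{\theta}\ge 0$; if $\mu=0$, then $s=-1$, $\theta'=1+r$, and $\Psi(t)=t\bigl((c_2-c_4)+c_3t^{r}\bigr)\ge 0$ since $r\le 3-s=4$ forces $(r-1)(r-2)\le 6=(1-s)(2-s)$, i.e.\ $c_4\le c_2$.

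Now assume $0<\mu<1$. Then $\Psi(0^+)=c_1>0$, while $\Psi(1)=c_1+c_2+c_3-c_4=q_2(r^2-s^2)+(1-q_2)(r-s)(3-r-s)\ge 0$ because $r>2>|s|$ and $r+s\le 3$. Differentiating, $\Psi'(t)=t^{-\mu}\bigl(c_2t^{\mu}+c_3\theta' t^{\theta}-c_4(1-\mu)\bigr)$, and the bracket is strictly increasing on $(0,1]$ (positive exponents $\mu,\theta$, and $c_3>0$) and negative as $t\to 0^+$; hence $\Psi'$ has at most one zero, at which it changes sign from $-$ to $+$. Therefore $\Psi$ is non-increasing, then non-decreasing, on $(0,1]$, so $\min_{(0,1]}\Psi$ is attained either at $t=1$, where $\Psi\ge 0$, or at the unique interior critical point $t_\ast\in(0,1)$.

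It remains to prove $\Psi(t_\ast)\ge 0$, and this is where I expect the main obstacle to lie. From $\Psi'(t_\ast)=0$, multiplied by $t_\ast^{1-\mu}$, one obtains $c_4(1-\mu)t_\ast^{1-\mu}=c_2t_\ast+c_3\theta' t_\ast^{\theta'}$; substituting this into $\Psi(t_\ast)$ and using $\mu c_4=c_1$ turns $\Psi(t_\ast)\ge 0$ into $c_1(1-\mu)\ge \mu c_2t_\ast+\theta c_3t_\ast^{\theta'}$, and a second use of the same relation to remove $c_2t_\ast$ simplifies it to the clean form
\[
c_1\bigl(1-t_\ast^{1-\mu}\bigr)\ \ge\ (\theta-\mu)\,c_3\,t_\ast^{\theta'}.
\]
Dropping the non-negative term $c_3\theta' t_\ast^{\theta}$ in the critical equation gives $t_\ast^{\mu}\le (c_4-c_1)/c_2$, which is $\le 1$ since $c_4\le c_2$, so $t_\ast\le\bigl((c_4-c_1)/c_2\bigr)^{1/\mu}$; one then verifies the displayed inequality for all such $t_\ast$. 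This last verification is where the remaining hypotheses $q_2\ge 1/2$ and $1-s^2\le(r-1)(r-2)$ are consumed, keeping $\mu$ and the exponents $\theta,\theta',1-\mu$ within a workable range, and I anticipate it requires a case distinction according to the size of $t_\ast$ — equivalently, according to whether $\alpha_2$ lies near $s$ or near $0$ — with the intermediate regime being the delicate point.
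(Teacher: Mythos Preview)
Your reduction via linearity in $q$ to the single value $q=q_2$ is exactly what the paper does, and your substitution is fine; the gap is that you never carry out the promised ``verification'' at the interior minimum $t_\ast$. You end with a displayed inequality $c_1(1-t_\ast^{1-\mu})\ge(\theta-\mu)c_3t_\ast^{\theta'}$ and a bound $t_\ast^\mu\le(c_4-c_1)/c_2$, then say ``one then verifies'' and speculate about case distinctions. That is precisely the part that would need to be \emph{done}, and as written the argument is incomplete.

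The reason this endpoint feels hard is that you are keeping one term too many. The paper's key observation is that the contribution $-q(r+1)x^r$ on the right of \eqref{2.4} has a definite sign: it is $\le 0$, so dropping it only strengthens the inequality. In your notation this means $\Psi(t)\ge \widetilde\Psi(t):=c_1+c_2t-c_4t^{1-\mu}$, and it suffices to show $\widetilde\Psi\ge 0$. Now pass to $y=x^s=1/t\ge 1$: then $y\,\widetilde\Psi(1/y)=c_1y+c_2-c_4y^{\mu}=:h(y)$, and since $\mu=\alpha_2/s$ was \emph{defined} so that $c_4\mu=c_1$, one gets $h'(y)=c_1(1-y^{\mu-1})\ge 0$ for $y\ge 1$ (using $\mu\le 1$). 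Thus $h$ is increasing, and $h(1)=c_1+c_2-c_4\ge 0$ follows from $r\le 3-s$ (which gives $(r-1)(r-2)\le(1-s)(2-s)$, i.e.\ $c_4\le c_2$). That is the entire proof.

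Two consequences worth noting. First, your expectation that the hypotheses $q_2\ge 1/2$ and $1-s^2\le(r-1)(r-2)$ are ``consumed'' at the hard step is mistaken: the paper's proof never uses them, and the lemma holds without them (they are present only because they describe the regime in which the lemma is applied later). Second, your attempt to locate and bound the interior minimum of $\Psi$ was fighting the presence of the $c_3t^{\theta'}$ term; once that term is discarded, the very definition $\mu=c_1/c_4$ makes the remaining function monotone in $y$, and no critical-point analysis is needed.
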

\begin{proof}
    As the expressions in \eqref{2.4} are linear functions of $q$, it suffices to prove inequality \eqref{2.4} for
    $q=q_2, 1$. The case $q=1$ is trivial and when $q=q_2$, we set $y=x^s$ to see that inequality \eqref{2.4} follows from $h(y) \geq 0$ for $y \geq 1$, where
\begin{align*}
   h(y)=(1-s)(q_2(1+s)y+(2-s)(1-q_2))-y^{\alpha_2/s}(r-1)(r-2)(1-q_2).
\end{align*}
   As $\alpha_2/s \leq 1$, it is easy to see that $h(y)$ is minimized at $y=1$ with a positive value and this completes the proof.
\end{proof}

   For $r>s, r^2+s^2 \neq 0, 0 \leq q<1, x>0$, we define
\begin{align}
\label{3.1}
   F_1(x, q) &=(qx^r+1-q)^{(1-r)/r}x^{r-1}-(qx^s+1-q)^{(1-s)/s}x^{s-1}-(r-s)(1-q)(x-1), \\
\label{3.2}
   F_2(x,q) & =(r-1)(q+(1-q)x^{-r})^{\frac {1-2r}r}x^{-r-1}+(1-s)(q+(1-q)x^{-s})^{\frac {1-2s}s}x^{-s-1}-(r-s).
\end{align}

   Part of our proof of Theorem \ref{thm2} needs $F_2(x,q) \geq 0$ for $0 < x \leq 1$ and various $q$. The following lemma gives a sufficient condition for this.
\begin{lemma}
\label{lem5}
   Let $-1 < s<0, 2 < r \leq 3-s, 0<x \leq 1, 0 \leq a, b<1$. If for $a \leq q < b$,
\begin{align}
\label{2.6}
    q+(1-q)x^{-s} & \leq x^{\alpha_1} ,   \\
(1-s)(q(1+s)x^s+(2-s)(1-q)) & \geq x^{\alpha_2}(r-1)(-q(r+1)x^r+(r-2)(1-q)). \nonumber
\end{align}
    Then $F_2(x,q) \geq 0$ for $a \leq q< b$ when $c(r,s, \alpha_1, \alpha_2) \leq 0$, where $F_2(x,q)$ is defined in
    \eqref{3.2} and
\begin{align}
\label{2.07}
   c(r,s, \alpha_1, \alpha_2) =& c_0(r,s)+((r-1)(2r-1)(1-3s)+(3r-1)(1-2s)(1-s))\frac {\alpha_1}{s}   \\
   &+(2r-1)(r-1)\alpha_2, \nonumber \\
   c_0(r,s) =& r^3-(5+4s)r^2+(2+6s+3s^2)r-s(2+s). \nonumber
\end{align}
\end{lemma}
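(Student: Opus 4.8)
The plan is to reduce the claimed lower bound $F_2(x,q)\ge 0$ to a single-variable estimate by substituting the two hypothesized inequalities \eqref{2.6} into the definition \eqref{3.2} of $F_2$, after first rewriting $F_2$ in a form amenable to such substitution. Concretely, I would multiply $F_2(x,q)$ through by the positive quantity $x^{r+1}(q+(1-q)x^{-r})^{(2r-1)/r}$ and similarly clear the $s$-power, obtaining an expression of the form
\begin{align*}
   (r-1)\bigl(q+(1-q)x^{-s}\bigr)^{\frac{1-2s}{s}}x^{r-s}+(1-s)\bigl(q+(1-q)x^{-r}\bigr)^{\frac{2r-1}{r}}x^{\text{(something)}}-(r-s)(\cdots),
\end{align*}
so that the exponents $(1-2s)/s<0$ and $(2r-1)/r>1$ appear; the hypotheses \eqref{2.6} are exactly the estimates needed to bound powers of $q+(1-q)x^{-s}$ from above by $x^{\alpha_1}$ (times the remaining base quantities) and to bound the mixed $r,s$-term. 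After the substitution, every surviving term is a monomial in $x$ with a rational-function-of-$(r,s)$ coefficient, and the inequality $F_2\ge 0$ collapses to showing that a certain polynomial/Laurent-polynomial in $x$ with coefficients depending on $r,s,\alpha_1,\alpha_2$ is non-negative on $(0,1]$.

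The second step is to identify that the worst case of this reduced inequality occurs at the endpoint $x=1$, so that non-negativity on $(0,1]$ is governed by the value (or the sign of the leading coefficient) of the reduced expression there — this is where the quantity $c(r,s,\alpha_1,\alpha_2)$ in \eqref{2.07} emerges. I would expand the reduced expression around $x=1$ (or factor out $(1-x)$ or $(1-x)^2$, since at $x=1$ all the means coincide and $F_2$ should vanish to appropriate order), and read off $c(r,s,\alpha_1,\alpha_2)$ as the coefficient controlling the sign. The explicit formula for $c$, with its linear dependence on $\alpha_1/s$ and $\alpha_2$ and the cubic remainder $c_0(r,s)$, strongly suggests that $c$ is precisely this leading coefficient after collecting the contributions: the term $(r-1)(2r-1)$ multiplying $\alpha_2$ matches the factor $(r-1)$ in front of the $r$-term in $F_2$ times the exponent-derived factor $(2r-1)$, and the bracketed coefficient of $\alpha_1/s$ should similarly come from differentiating the two $s$-dependent pieces.

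The main obstacle — and where I expect the real work to be — is verifying that the endpoint $x=1$ is genuinely the minimizer of the reduced one-variable function, i.e. that no interior minimum on $(0,1)$ dips below zero once $c\le 0$. This requires a convexity or monotonicity argument on the reduced expression: one would like to show that after the substitutions the function is, say, convex in a suitable variable (perhaps $y=x^s$ or $y=x^{-s}$ as in Lemmas \ref{lem3} and \ref{lem4}, or $\log x$), or that its derivative has a single sign change. The delicate point is that the substituted inequalities \eqref{2.6} are only valid on the range $a\le q<b$ and for $0<x\le1$, so the reduction is not an identity but a one-sided bound, and one must check that the bounding function still vanishes to the right order at $x=1$ so that the sign of $c$ actually decides matters. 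I would handle this by checking that both sides of each inequality in \eqref{2.6} agree at $x=1$ (they do: $q+(1-q)=1=1^{\alpha_1}$), so the bound is tight at the endpoint and the Taylor coefficient $c$ is computed against the true local behaviour of $F_2$; the non-negativity on the rest of $(0,1)$ then follows from the structural monotonicity/convexity of the monomial expression, which is elementary once the correct change of variable is fixed.
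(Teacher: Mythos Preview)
Your plan has a genuine structural gap: the two hypotheses \eqref{2.6} are \emph{not} bounds on the factors that appear in $F_2$ itself, and you cannot substitute them directly into \eqref{3.2}. The first hypothesis controls $q+(1-q)x^{-s}$, but $F_2$ also contains the factor $(q+(1-q)x^{-r})^{(1-2r)/r}$, which neither hypothesis touches. The second hypothesis is a comparison between $(1-s)\bigl(q(1+s)x^s+(2-s)(1-q)\bigr)$ and $(r-1)\bigl(-q(r+1)x^r+(r-2)(1-q)\bigr)$; these expressions do not occur anywhere in $F_2$. They occur instead in $\partial F_2/\partial x$. So ``clear denominators and substitute'' simply does not close, and no convexity argument on a reduced monomial expression can rescue it, because there is no such reduced expression.

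What the paper actually does is a critical-point reduction. Since $\lim_{x\to 0^+}F_2(x,q)>0$ and $F_2(1,q)=0$, it is enough to check that $F_2\ge 0$ at any interior point with $\partial F_2/\partial x=0$. Writing out that derivative condition gives the identity \eqref{2.3'}, which expresses $(qx^r+1-q)^{(1-3r)/r}$ in terms of $(q+(1-q)x^{-s})^{(1-3s)/s}$ times the very ratio that the second line of \eqref{2.6} bounds by $x^{-\alpha_2}$. Combined with the first line of \eqref{2.6} (note $(1-3s)/s<0$), this yields an upper bound $qx^r+1-q\le x^{r(\alpha_3+1-2s-r)/(1-3r)}$ with $\alpha_3=\tfrac{1-3s}{s}\alpha_1+\alpha_2$, and hence a lower bound on the $r$-factor of $F_2$. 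The two positive terms of $F_2/(r-s)$ are then merged by the weighted AM--GM inequality (weights $(r-1)/(r-s)$ and $(1-s)/(r-s)$) into a single power $x^{c(r,s,\alpha_1,\alpha_2)/((r-s)(3r-1))}$, and the condition $c\le 0$ makes this $\ge 1$ on $(0,1]$. So $c$ is not a Taylor coefficient at $x=1$; it is the exponent in a global monomial lower bound obtained \emph{only after} restricting to critical points. The pieces you were missing are the critical-point equation \eqref{2.3'} as the source of control on the $r$-factor, and the AM--GM step that collapses the two-term sum to a single power of $x$.
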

\begin{proof}
    As in this case $\displaystyle \lim_{x \rightarrow 0^+}F_2(x,q)>0, F_2(1,q)=0$, we only need to show the values of $F_2$ at points satisfying:
\begin{align*}
   \frac {\partial F_2}{\partial x}=0,
\end{align*}
  are non-negative.

   Calculation shows that at these points, we have
\begin{align}
\label{2.3'}
   & (qx^r+1-q)^{\frac {1-3r}{r}}x^{r-2}   \\
 =& \frac {(1-s)(q(1+s)x^s+(2-s)(1-q))}{(r-1)(-q(r+1)x^r+(r-2)(1-q))}(q+(1-q)x^{-s})^{\frac {1-3s}{s}}x^{-1-2s}.  \nonumber
\end{align}

  If $-q(r+1)x^r+(r-2)(1-q) \leq 0$, then no such points exist. Hence we may assume that $-q(r+1)x^r+(r-2)(1-q) > 0$.
  Applying \eqref{2.6} in \eqref{2.3'}, we find that
\begin{align*}
   (qx^r+1-q)^{\frac {1-3r}{r}} \geq x^{\frac {1-3s}{s}\alpha_1+\alpha_2+1-2s-r}.
\end{align*}

     We write $\alpha_3=\frac {1-3s}{s}\alpha_1+\alpha_2$ so that the above inequality implies that
\begin{align*}
     qx^r+1-q \leq x^{\frac {r(\alpha_3+1-2s-r)}{1-3r}}.
\end{align*}

   We now apply the arithmetic-geometric inequality and the above estimation to see that
\begin{align*}
    & \frac {F_2(x,q)}{r-s}  \\
   =& \frac {r-1}{r-s}(qx^r+1-q)^{\frac {1-2r}r}x^{r-2}+\frac {1-s}{r-s}(q+(1-q)x^{-s})^{\frac {1-2s}s}x^{-1-s} -1 \nonumber \\
   \geq &\frac {r-1}{r-s}x^{(\alpha_3+1-2s-r)(1-2r)/(1-3r)}x^{r-2}+\frac {1-s}{r-s}x^{(1-2s)\alpha_1/s}x^{-1-s} -1 \nonumber \\
    \geq & \displaystyle x^{(\alpha_3+1-2s-r)(1-2r)(r-1)/((1-3r)(r-s))} \cdot x^{((r-1)(r-2)-(1-s^2))/(r-s)} \cdot x^{(1-2s)\alpha_1(1-s)/(s(r-s))} -1 \nonumber \\
   = & \displaystyle x^{c(r,s, \alpha_1, \alpha_2)/((r-s)(3r-1))}-1. \nonumber
\end{align*}
   The assertion of the lemma now follows easily.
\end{proof}

\begin{lemma}
\label{lem2}
   Let $-1 < s<0, 2 < r \leq 3-s $. Let $c(r,s,\alpha_1, \alpha_2)$ be defined as in Lemma \ref{lem5}.
   Define
\begin{align}
\label{1.3}
  c_1(r,s) &=r^3-(6+s)r^2+(s^2+4)r-s(s^2-6s+4), \\
  c_2(r,s) &=(r-1)(-1-2s)-(1-s)(1+s), \nonumber \\
  c_3(r,s) &=c(r,s, 0, s),  \nonumber \\
  c_4(r,s) &=c(r,s, -0.0889s, \frac {(1-s^2)s}{(r-1)(r-2)}). \nonumber
\end{align}
  Then $\displaystyle \max_{1 \leq i \leq 4} \{ c_i(r,s)\} \leq 0$ when $-1/2 \leq s<0$.
\end{lemma}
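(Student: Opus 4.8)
The plan is to verify $c_i(r,s)\le 0$ separately for each $i\in\{1,2,3,4\}$ on $\mathcal R:=\{(r,s):-1/2\le s<0,\ 2<r\le 3-s\}$; the cases $i=1,2,3$ reduce to one-variable polynomial inequalities, and $c_4$ carries the bulk of the work.

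For $c_2$, rewriting gives $c_2(r,s)=-(1+2s)r+s^2+2s$, linear in $r$ with slope $-(1+2s)\le 0$ since $s\ge -1/2$ (the only place this hypothesis is used for $c_2$), so $c_2(r,s)\le c_2(2,s)=s^2-2s-2$, which is decreasing in $s$ on $[-1/2,0)$ and hence $\le c_2(2,-1/2)=-3/4<0$. For $c_1$ and $c_3$, each is a cubic in $r$ with leading coefficient $1$, and $\partial_r c_1(2,s)=s^2-4s-8$, $\partial_r c_3(2,s)=3s^2-5s-6$ are both negative on $[-1/2,0)$ (quadratics in $s$ with no zero there and negative endpoint values). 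Hence $r=2$ lies strictly between the two critical points of $r\mapsto c_i(r,s)$, so on $(2,3-s]$ this cubic is decreasing and then at most increasing, and $\sup_{2<r\le 3-s}c_i(r,s)=\max\{c_i(2,s),c_i(3-s,s)\}$. The substitutions give $c_1(2,s)=-s^3+8s^2-8s-8$, $c_3(2,s)=5s^2-3s-8$, $c_1(3-s,s)=-4s^3+18s^2-8s-15$, $c_3(3-s,s)=-6s^3+21s^2-9s-12$; each has negative derivative in $s$ on $[-1/2,0)$, so each is bounded above by its value at $s=-1/2$, namely $-15/8$, $-21/4$, $-6$, $-3/2$, all negative. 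This disposes of $c_1,c_2,c_3$.

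For $c_4$, expanding the definition of $c(r,s,\alpha_1,\alpha_2)$ with $\alpha_1=-0.0889\,s$ and $\alpha_2=\dfrac{(1-s^2)s}{(r-1)(r-2)}$ gives $c_4(r,s)=c_0(r,s)-0.0889\,D(r,s)+\dfrac{(2r-1)(1-s^2)s}{r-2}$, where $D(r,s)=(r-1)(2r-1)(1-3s)+(3r-1)(1-2s)(1-s)$; since $r-2>0$ on $\mathcal R$ it suffices to show that $G(r,s):=(r-2)c_4(r,s)=(r-2)\big(c_0(r,s)-0.0889\,D(r,s)\big)+(2r-1)(1-s^2)s$, a monic quartic in $r$, is $\le 0$ on $\mathcal R$. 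I would show $G(\cdot,s)$ is ``decreasing then increasing'' on $[2,3-s]$: the second derivative $\partial_r^2 G(\cdot,s)$ is an upward parabola in $r$ with $\partial_r^2 G(2,s)=2(3s^2-10s-6)-0.1778(6s^2-24s+8)<0$ on $[-1/2,0)$, so $\partial_r G(\cdot,s)$ is decreasing on $[2,\rho]$ and increasing on $[\rho,3-s]$ for some $\rho$; and $\partial_r G(2,s)=c_0(2,s)-0.0889\,D(2,s)+2(1-s^2)s$, a cubic in $s$ with value $\approx -6.5$ at $s=-1/2$ and negative on $[-1/2,0)$, so $\partial_r G(\cdot,s)$ changes sign at most once, from $-$ to $+$. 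Hence $\sup_{2\le r\le 3-s}G(r,s)=\max\{G(2,s),G(3-s,s)\}$, and $G(2,s)=3(1-s^2)s<0$ while $G(3-s,s)=(1-s)\big(c_0(3-s,s)-0.0889\,D(3-s,s)+(5-2s)(1+s)s\big)$ using $(1-s^2)/(1-s)=1+s$, the bracketed cubic in $s$ being $\le 0$ on $[-1/2,0)$ — again via the sign of its $s$-derivative and its value $\approx -1.37$ at $s=-1/2$. This completes $c_4$, and hence the lemma.

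The main obstacle is the $c_4$ case. One is forced to clear the $(r-2)$ denominator (the term $(2r-1)(1-s^2)s/(r-2)$ tends to $-\infty$ as $r\to 2^+$, so it is harmless there, but it obstructs any direct treatment of $c_4$ itself) and then to control a genuine quartic in $r$ on the $s$-dependent interval $[2,3-s]$; the unimodality argument via the second-derivative test at $r=2$ is the delicate step. Moreover, since $0.0889$ is only a rational surrogate for the optimal exponent furnished by Lemma \ref{lem3}, the polynomial inequalities $\partial_r^2 G(2,s)<0$, $\partial_r G(2,s)<0$ and $G(3-s,s)<0$ carry little slack, so the verifications — in practice, a monotonicity check in $s$ and evaluation at the endpoint $s=-1/2$ — must be carried out carefully. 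By contrast $c_1,c_2,c_3$ collapse to one-variable polynomial monotonicity and a single evaluation at $s=-1/2$.
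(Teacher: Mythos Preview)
Your argument is correct. For $c_1,c_2,c_3$ it coincides with the paper's proof: both reduce the $r$-dependence to an endpoint check (the paper phrases this as convexity of the cubics in $r$ for $r\ge 2$, you via the sign of $\partial_r c_i(2,s)$ and the location of the critical points --- these are equivalent observations), and then both finish by monotonicity in $s$ and evaluation at $s=-1/2$. Your computed endpoint values $c_1(3-s,s)$, $c_3(3-s,s)$ agree with the paper's.

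The genuine difference is in the treatment of $c_4$. You clear the $(r-2)$ denominator and work with the monic quartic $G(r,s)=(r-2)c_4(r,s)$, establishing unimodality in $r$ on $[2,3-s]$ from the sign of $\partial_r^2 G(2,s)$ and $\partial_r G(2,s)$; this is correct (I checked that $\partial_r^2 G(2,s)=4.9332\,s^2-15.7328\,s-13.4224$ is indeed negative on $[-1/2,0)$, and likewise $\partial_r G(2,s)$), and your endpoint value $G(3-s,s)/(1-s)$ is exactly the paper's $c_4(3-s,s)$. The paper instead avoids the quartic entirely: it observes that for $s<0$ and $r>2$ both correction terms in $c_4-c_0$ are non-positive (the $\alpha_1$-term because $\alpha_1/s=-0.0889<0$ multiplies a positive bracket, the $\alpha_2$-term because $(2r-1)(1-s^2)s/(r-2)<0$), so $c_4\le c_0$ on $(2,3]$ and convexity of $c_0$ handles that subinterval; on $[3,3-s]$ the paper uses that $c_4$ itself is convex (the $1/(r-2)$ piece being convex for $r>2$). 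This is shorter and sidesteps the second-derivative computation you carry out, at the cost of splitting the interval at $r=3$; your route is more uniform but more computational. Both routes land on the same final check of $c_4(3-s,-1/2)\approx -1.37$.
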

\begin{proof}
   It is easy to see that $c_i, 0 \leq i \leq 3$ are all convex functions of $r \geq 2$, where $c_0(r,s)$ is defined in \eqref{2.07}.  Also, $c_4$ is a convex function of $r \geq 3$.  Thus, it suffices to show that $c_i, 1 \leq i \leq 3$ are non-positive for $-1/2 \leq s<0, r=2, 3-s$ and that $c_4$ is non-positive for $-1/2 \leq s<0, 2 < r \leq 3, r=3-s$.    One checks directly that $\displaystyle \max_{i=1,2} \{ c_i(2,s), c_i(3-s,s) \} \leq 0, c_3(2,s) \leq c_0(2,s) \leq 0$ and that $\displaystyle \max_{2 <r \leq 3} c_4(r,s) \leq \displaystyle\max_{2 <r \leq 3} c_0(r,s)  \leq \displaystyle\max \{ c_0(2,s), c_0(3,s) \} =0$. We also have
\begin{align*}
      c_3(3-s,s) &=-12-9s +21s^2-6s^3, \\
      c_4(3-s,s) & = -12-14s +33s^2-10s^3-0.0889(18-66s+54s^2-12s^3).
\end{align*}

   As both expressions on the right-hand side above are decreasing functions of $s<0$, and one checks directly that $c_3(3-s,s)<0, c_4(3-s,s) <0$ for $s=-1/2$, it follows that $\displaystyle \max_{i=3,4} \{ c_i(3-s,s) \} \leq 0$ and this completes the proof.

\end{proof}
%%--------------------------------------------------------------------------------------
%%--------------------------------------------------------------------------------------

%%----------------------------------------------------------------------------
\section{Proof of Theorem \ref{thm2}}
\label{sec 3} \setcounter{equation}{0}
%%----------------------------------------------------------------------------

     We assume that $r>s$ throughout this section. We omit the discussions on the conditions for equality in each inequality we shall prove as one checks easily that the desired conditions hold by going through our arguments in what follows.  As the case $s=0,1$ or $r=1$ has been proven in \cite[Theorem 3.2]{G4} and \cite[Theorem 2]{G2016}, we further assume $r \neq 1, s \neq 0,1$ in what follows.

     Now, Lemma \ref{lem1} implies that it remains to prove the ``if " part of Theorem \ref{thm2}. We consider the right-hand side inequality of \eqref{1.5'} first.
     Let $F$ be defined as in \eqref{2.0} and we assume that $x_1=1<x_2<\cdots <x_n$, $q_i>0, 1 \leq i \leq n$.
    We have
\begin{align*}
   F_0(x_1, \cdots, x_n, q_1, \cdots, q_n): =\frac {\partial F}{q_n\partial x_n}=M^{1-r}_{n,r}x^{r-1}_n-M^{1-s}_{n,s}x^{s-1}_n-(r-s)(x_n-A_n).
\end{align*}
   Now the right-hand side inequality of \eqref{1.5'} follows from $F \leq 0$, which in turn follows from $F_0 \leq 0$ as it implies $F({\bf x};{\bf q} ) \leq \lim_{x_n \rightarrow x_{n-1}}F({\bf x};{\bf q})$. By adjusting the value of $q_{n-1}$ in the expression of $\lim_{x_n \rightarrow x_{n-1}}F({\bf x};{\bf q})$ and repeating the process, it follows easily that $F \leq 0$.

   When $n \geq 3$, we regard $x_1=1, x_n$ as fixed and assume that $F_0$ is maximized at some point $({\bf x}';{\bf q}')=(x'_1, \cdots, x'_n, q'_1, \cdots, q'_n)$ with $x'_1=x_1, x'_n=x_n$. Then at this point we must have
\begin{align*}
   \frac {\partial F_0}{\partial x_i}\Big |_{({\bf x}';{\bf q}')}=0, \quad 2 \leq i \leq n-1.
\end{align*}
   Thus, the $x'_i, 2 \leq i \leq n-1$ are solutions of the equation:
\begin{align*}
   f_1(x):=(1-r)M^{1-2r}_{n,r}x^{r-1}_nx^{r-1}-(1-s)M^{1-2s}_{n,s}x^{s-1}_nx^{s-1}+r-s=0.
\end{align*}
   It is easy to see that the above equation can have at most two different positive roots.

   On the other hand, by applying the method of Lagrange multipliers, we let
\begin{align*}
   \tilde{F}_0(x_1, \cdots, x_n, q_1, \cdots, q_n, \lambda)=F_0(x_1, \cdots, x_n, q_1, \cdots, q_n)-\lambda(\sum^n_{i=1}q_i-1),
\end{align*}
   where $\lambda$ is a constant. Then at $({\bf x}';{\bf q}')$ we must have
\begin{align*}
   \frac {\partial \tilde{F}_0}{\partial q_i}\Big |_{({\bf x}';{\bf q}')}=0, \quad 1 \leq i \leq n.
\end{align*}
   Thus, the $x'_i, 1 \leq i \leq n$ are solutions of the equation:
\begin{align*}
   f_2(x) := \frac {1-r}{r}M^{1-2r}_{n,r}x^{r-1}_nx^{r}-\frac {1-s}{s}M^{1-2s}_{n,s}x^{s-1}_nx^{s}+(r-s)x-\lambda=0.
\end{align*}
   As $f'_2(x)=f_1(x)$, it follows from the mean value theorem that there is a solution of $f_1(x)=0$ between any two adjacent $x'_i, x'_{i+1}, 1 \leq i \leq n-1$, as they are solutions of $f_2(x)=0$.  But when $n \geq 3$, we have at least $x'_2$ as a solution of $f_1(x)=0$. This would imply that
   $f_1(x)=0$ has at least three different positive solutions (for example, one in between $x'_1$ and $x'_2$, one in between $x'_2$ and $x'_3$, and $x'_2$ itself), a contradiction.

   Therefore, it remains to show $F_0 \leq 0$ for $n=2$. In this case, we let $1=x_1<x_2=x, 0< q_2=q <1, q_1=1-q$ to see that $F_0=F_1(x,q)$, where $F_1(x,q)$ is defined in \eqref{3.1}.

    Note that $F_2(x,q) = (1-q)^{-1}\partial F_1/\partial x$, where $F_2(x,q)$ is defined in \eqref{3.2}. As $F_1(1,q)=0$, we see that it suffices to show that $F_2(x,q) \leq 0$ for $x \geq 1$.

   We now divide the proof of the right-hand side inequality of \eqref{1.5'} for $r>s \neq 0$ satisfying $0 \leq r+s \leq 3$, $r \leq 2, s \geq -1$ into several cases. As the case $-1 \leq s \leq 1 \leq r \leq 2$ follows directly from Lemma \ref{lem0}, we only consider the remaining cases in what follows and we show in these cases $F_2(x,q) \leq 0$ or equivalently, $F_2(x,q)/(r-s)+1 \leq 1$. Note that
\begin{align}
\label{3.2'}
    \frac {F_2(x,q)}{r-s}+1 = \frac {r-1}{r-s}(q+(1-q)x^{-r})^{\frac {1-2r}r}x^{-r-1}+\frac {1-s}{r-s}(q+(1-q)x^{-s})^{\frac {1-2s}s}x^{-s-1}.
\end{align}
   One checks that in all the following cases, we have $(r-1)(1-s) \leq 0$. Therefore, it follows from the arithmetic-geometric mean inequality with non-positive weights that the right-hand side expression in \eqref{3.2'} is less than or equal to
\begin{align}
\label{2.1}
  &  (q+(1-q)x^{-r})^{(1-2r)(r-1)/(r(r-s))}(q+(1-q)x^{-s})^{(1-2s)(1-s)/(s(r-s))}x^{(s^2-r^2)/(r-s)} \\
  =& (qx^r+1-q)^{(1-2r)(r-1)/(r(r-s))}(qx^s+1-q)^{(1-2s)(1-s)/(s(r-s))}x^{r+s-3}. \nonumber
\end{align}

    Thus, it suffices to show that either side expression in \eqref{2.1} is $\leq 1$.

   Case 1. $0< s \leq 1/2 \leq r < 1$.

   Each factor of the left-hand side expression in \eqref{2.1} is $\leq 1$, hence their product is $\leq 1$.

   Case 2. $0<s<r \leq 1/2$.

   As it is well-known that $r \mapsto M_{n,r}$ is an increasing function of $r$ and $-r<-s$, we have
\begin{align*}
   (q+(1-q)x^{-r})^{-1/r} \leq (q+(1-q)x^{-s})^{-1/s}.
\end{align*}
   As we also have $(1-2r)(1-r) \leq (1-2s)(1-s)$, it follows that
\begin{align*}
   & (q+(1-q)x^{-r})^{(1-2r)(r-1)/(r(r-s))}(q+(1-q)x^{-s})^{(1-2s)(1-s)/(s(r-s))} \\
   \leq & (q+(1-q)x^{-s})^{((1-2r)(1-r)-(1-2s)(1-s))/(-s(r-s))} \leq 1,
\end{align*}
   which implies that the left-hand side expression of \eqref{2.1} is $\leq 1$.

   Case 3.  $1/2 \leq s<r <1$.

    Note that
\begin{align*}
   q+(1-q)x^{-r} \leq q+(1-q)x^{-s},
\end{align*}
   so that the left-hand side expression of \eqref{2.1} is less than or equal to
\begin{align*}
   & (q+(1-q)x^{-s})^{(1-2r)(r-1)/(r(r-s))}(q+(1-q)x^{-s})^{(1-2s)(1-s)/(s(r-s))}x^{(s^2-r^2)/(r-s)} \\
   =& \left\{\begin{array}{l} (q+(1-q)x^{-s})^{1/(rs)-2}x^{(s^2-r^2)/(r-s)} \leq 1,  \quad \frac 1{rs} \geq 2, \\
    (qx^s+1-q)^{1/(rs)-2}x^{-1/r-r+s} \leq 1, \quad \frac 1{rs} \leq 2.
    \end{array}\right.
\end{align*}
 This implies that the left-hand side expression of \eqref{2.1} is $\leq 1$.

   Case 4. $1<s<r \leq 3-s=\min \{ 2, 3-s \}$.

    Note that
\begin{align*}
   qx^r+1-q \geq qx^s+1-q \geq 1, \quad (1-2r)(r-1) \leq 0.
\end{align*}

   It follows that
\begin{align*}
   & (qx^r+1-q)^{(1-2r)(r-1)/(r(r-s))}(qx^s+1-q)^{(1-2s)(1-s)/(s(r-s))}x^{r+s-3} \\
   \leq & (qx^s+1-q)^{(1-2r)(r-1)/(r(r-s))}(qx^s+1-q)^{(1-2s)(1-s)/(s(r-s))}x^{r+s-3} \\
   =& (qx^s+1-q)^{1/(rs)-2}x^{r+s-3}\leq 1,
\end{align*}
   which implies that the right-hand side expression of \eqref{2.1} is $\leq 1$.

   Case 5. $s<0<r<1, r+s \geq 0$.

    When $r \geq 1/2$, each factor of the left-hand side expression of \eqref{2.1} is $\leq 1$, hence their product is $\leq 1$. If $0<r <1/2$, then again it follows from the fact that $r \mapsto M_{n,r}$ is an increasing function of $r$ that
\begin{align*}
   (qx^r+1-q)^{1/r} \geq (qx^s+1-q)^{1/s} \geq 1.
\end{align*}
   As $(1-2r)(r-1) \leq 0$ and $3-2(r+s) \geq 0$, it follows that
\begin{align*}
   & (qx^r+1-q)^{(1-2r)(r-1)/(r(r-s))}(qx^s+1-q)^{(1-2s)(1-s)/(s(r-s))}x^{r+s-3} \\
   \leq & (qx^s+1-q)^{(1-2r)(r-1)/(s(r-s))}(qx^s+1-q)^{(1-2s)(1-s)/(s(r-s))}x^{r+s-3} \\
   =& (q+(1-q)x^{-s})^{(3-2(r+s))/s}x^{-(r+s)}\leq 1.
\end{align*}
   This now completes the proof for all the cases for the right-hand side inequality of \eqref{1.5'}.

%%-----------------------------------------------------------------------

  Next, we prove the left-hand side inequality of \eqref{1.5'} for $0 \leq r+s \leq 3$, $-1/2 \leq s \leq 1, r \geq 1$.  In this case, it suffices to show $F \geq 0$ provided that we assume $0<x_1<x_2<\cdots <x_n=1$. Similar to our discussions above, one shows easily that this follows from $\partial F/\partial x_1 \leq 0$ for $n=2$, which is equivalent to $F_1(x,q) \leq 0$ for $0<x \leq 1$.  Again we divide the proof into several cases. As the case $-1 \leq s \leq 1 \leq r \leq 2$ follows directly from Lemma \ref{lem0}, we only consider the remaining cases in what follows and similar to our proof of the right-hand side inequality of \eqref{1.5'} above, it suffices to show that $F_2(x,q) \geq 0$ for $0<x \leq 1$.

   Case 1. $1/2 \leq s<1,  2< r \leq 3-s$.

   As $r-1>0$, it follows from the arithmetic-geometric mean inequality that the right-hand side expression of \eqref{3.2'} is greater than or equal to the expressions in \eqref{2.1}. As the factors of the right-hand side expression of \eqref{2.1} are all $\geq 1$, it follows that $F_2(x,q) \geq 0$.

   For the remaining cases, one checks easily that we have $\lim_{x \rightarrow 0^+}F_2(x,q)>0, F_2(1,q)=0$ so that it suffices to show the values of $F_2(x,q)$ at points satisfying \eqref{2.3'} are non-negative, assuming that
$-q(r+1)x^r+(r-2)(1-q) > 0$. Hence, in what follows, we shall only evaluate $F_2(x,q)$ at these points satisfying the above assumption. We then note that at these points, we have
\begin{align}
\label{3.4'}
  (1-s)(q(1+s)x^s+(2-s)(1-q)) \geq (r-1)(-q(r+1)x^r+(r-2)(1-q)).
\end{align}
   This is seen by noting that the expressions in \eqref{3.4'} are linear functions of $q$, hence it suffices to check the validity of inequality \eqref{3.4'} at $q=0,1$.

   It then follows from \eqref{3.4'} and \eqref{2.3'} that at these points we have
\begin{align}
\label{3.3}
     (qx^r+1-q)^{\frac {1-3r}{r}}x^{r-2}\geq (q+(1-q)x^{-s})^{\frac {1-3s}{s}}x^{-1-2s}=(qx^s+1-q)^{\frac {1-3s}{s}}x^{s-2},
\end{align}
   an inequality we shall assume in what follows.

   Case 2. $0<s<1/2,  2< r \leq 3-s$.

   Similar to the previous case, the right-hand side expression of \eqref{3.2'} is greater than or equal to the expressions in \eqref{2.1}.
   From \eqref{3.3} we deduce that
\begin{align*}
     qx^r+1-q \leq  (qx^s+1-q)^{\frac {r(1-3s)}{s(1-3r)}}x^{\frac {r(s-r)}{1-3r}}.
\end{align*}
   Using this, we see that the right-hand side expression of \eqref{2.1} is greater than or equal to
\begin{align*}
   (qx^s+1-q)^{(2r-1)(r-1)(1-3s)/(s(3r-1)(r-s))}(qx^s+1-q)^{(1-2s)(1-s)/(s(r-s))}x^{-(2r-1)(r-1)/(3r-1)}x^{r+s-3}.
\end{align*}
    When $1/3 \leq s < 1/2$, we see that the first factor and the last factor above is $\geq 1$ and we write the product of the two factors in the middle as
\begin{align}
\label{3.4}
   (q+(1-q)x^{-s})^{(1-2s)(1-s)/(s(r-s))}x^{(1-2s)(1-s)/(r-s)-(2r-1)(r-1)/(3r-1)}.
\end{align}
    Note that the first factor above is now $\geq 1$ and it is easy to see that
\begin{align*}
   \frac {(1-2s)(1-s)}{r-s} \leq \frac {1-2s}{2-s} \leq \frac {1-2\cdot 1/3}{2-1/3} \leq \frac {2r-1}{3r-1} \leq \frac {(2r-1)(r-1)}{3r-1}.
\end{align*}
    This implies that the second factor in \eqref{3.4} is also $\geq 1$. Hence the right-hand side expression of \eqref{2.1} is greater than or equal to $1$ and it follows that $F_2(x,q) \geq 0$.

    When $0 < s < 1/3$,  it follows from \eqref{3.3} that
\begin{align}
\label{3.6}
     (qx^r+1-q)^{\frac {1-2r}{r}}x^{r-2} \geq (qx^r+1-q)(qx^s+1-q)^{\frac {1-3s}{s}}x^{s-2}.
\end{align}
   If the right-hand side expression above is $\geq 1$, then we have
\begin{align}
\label{3.7}
   F_2(x,q) &=(r-1)(qx^r+1-q)^{\frac {1-2r}r}x^{r-2}+(1-s)(q+(1-q)x^{-s})^{\frac {1-2s}s}x^{-s-1} -(r-s) \\
       & \geq (r-1)+(1-s) -(r-s)=0. \nonumber
\end{align}

   If the right-hand side expression of \eqref{3.6} is $\leq 1$,  then it implies that
\begin{align*}
   (qx^r+1-q) \leq (qx^s+1-q)^{-\frac {1-3s}{s}}x^{-(s-2)}.
\end{align*}
   Thus,
\begin{align*}
   (qx^r+1-q)^{\frac {1-2r}r}x^{r-2} & \geq ((qx^s+1-q)^{-\frac {1-3s}{s}}x^{-(s-2)})^{(1-2r)/r}x^{r-2} \\
   &=(q+(1-q)x^{-s})^{\frac {(1-3s)(2r-1)}{rs}}x^{\frac {(1+2s)(1-2r)}{r}+r-2} \geq 1 ,
\end{align*}
    where the last inequality above follows from the observation that the function $r \mapsto (1+2s)(1-2r)/r+r-2$ is an increasing function of $r \geq 2$ and hence is maximized at $r=3-s$, in which case its value is easily shown to be negative. It follows from \eqref{3.7} that  $F_2(x,q) \geq 0$ in this case.

   Case 3. $-1/2 \leq s<0,  2< r \leq 3-s$.

  We divide this case into a few subcases:

Subcase 1. $0<q \leq 1/2$.

   As $r \mapsto M_{n,r}$ is an increasing function of $r$ and $-s \leq r$ since $r+s \geq 0$, we have
\begin{align}
\label{2.08}
     (q+(1-q)x^{-s})^{-1/s} \leq (q+(1-q)x^{r})^{1/r} .
\end{align}
    As $0< q \leq 1/2$, we also have
\begin{align}
\label{3.10'}
     q+(1-q)x^{r} \leq qx^r+1-q.
\end{align}

    We then deduce from \eqref{3.3}, \eqref{2.08} and \eqref{3.10'} that
\begin{align*}
     (qx^r+1-q)^{\frac {1-3r}{r}}x^{r-2} \geq (q+(1-q)x^{-s})^{\frac {1-3s}{s}}x^{-1-2s} \geq (qx^r+1-q)^{-\frac {1-3s}{r}}x^{-1-2s}.
\end{align*}
    It follows that (note that $2-3(r+s) \leq 0$ for $r \geq 2, s \geq -1$)
\begin{align}
\label{3.11}
     qx^r+1-q \leq x^{\frac {r(1-2s-r)}{2-3(r+s)}}.
\end{align}

   With $c_1(r,s)$ being defined in \eqref{1.3}, we then deduce that
\begin{align*}
   & \frac {F_2(x,q)}{r-s} \\
   =&  \frac {r-1}{r-s}(qx^r+1-q)^{\frac {1-2r}r}x^{r-2}+\frac {1-s}{r-s}(q+(1-q)x^{-s})^{\frac {1-2s}s}x^{-1-s}-1  \nonumber \\
   \geq &  \frac {r-1}{r-s}(qx^r+1-q)^{\frac {1-2r}r}x^{r-2}+\frac {1-s}{r-s}(qx^r+1-q)^{-\frac {1-2s}r}x^{-1-s}-1  \nonumber \\
   \geq & (qx^r+1-q)^{((1-2r)(r-1)-(1-s)(1-2s))/(r(r-s))}\cdot x^{((r-1)(r-2)-(1-s)(1+s))/(r-s)} -1 \nonumber \\
   \geq & \displaystyle x^{(1-2s-r)((1-2r)(r-1)-(1-s)(1-2s))/((2-3(r+s))(r-s))}\cdot x^{((r-1)(r-2)-(1-s)(1+s))/(r-s)}-1   \nonumber \\
   = & x^{c_1(r,s)/((r-s)(3(r+s)-2))}-1, \nonumber
\end{align*}
   where the first inequality above follows from \eqref{2.08} and \eqref{3.10'}, the second inequality above follows from the arithmetic-geometric inequality and the last inequality above follows from \eqref{3.11}. It follows from Lemma \ref{lem2} that $F_2(x,q) \geq 0$ in this case.

%%--------------------------------------------------------------------------------------
%%--------------------------------------------------------------------------------------

   Subcase 2. $1/2 \leq q \leq 1, (1+s)x^s-(2-s) \geq 0$ or $1-s^2 \geq (r-1)(r-2)$.

   One checks that if $(1+s)x^s-(2-s) \geq 0$, then the function $q \mapsto (1-s)(q(1+s)x^s+(2-s)(1-q))(qx^r+1-q)-(r-1)(-q(r+1)x^r+(r-2)(1-q))$ is a concave function of $q$ and hence is minimized at $q=0,1$, with values $\geq 1$.

   If $1-s^2 \geq (r-1)(r-2)$, then
\begin{align*}
   (1-s)(q(1+s)x^s+(2-s)(1-q))\geq  q(1-s)(1+s)+(1-s)(2-s)(1-q) \geq  (r-1)(r-2).
\end{align*}

   It follows that
\begin{align*}
   & (1-s)(q(1+s)x^s+(2-s)(1-q))(qx^r+1-q) \\
    \geq & (r-1)(r-2)(1-q) \\
   \geq & (r-1)(-q(r+1)x^r+(r-2)(1-q)).
\end{align*}

   Thus, in either case, we deduce from the above and \eqref{2.3'} that we have
\begin{align*}
   (qx^r+1-q)^{\frac {1-2r}{r}}x^{r-2} \geq (q+(1-q)x^{-s})^{\frac {1-3s}{s}}x^{-1-2s} \geq x^{-1-2s}.
\end{align*}
   From this we apply the arithmetic-geometric inequality to see that
\begin{align}
\label{2.5}
   \frac {F_2(x,q)}{r-s}& =\frac {r-1}{r-s}(qx^r+1-q)^{\frac {1-2r}r}x^{r-2}+\frac {1-s}{r-s}(q+(1-q)x^{-s})^{\frac {1-2s}s}x^{-1-s} -1 \\
   &  \geq \frac {r-1}{r-s}x^{-1-2s}+\frac {1-s}{r-s}x^{-1-s} -1 \nonumber \\
   & \geq x^{\frac {c_2(r,s)}{r-s}}  -1 \nonumber.
\end{align}
   where $c_2(r,s)$ is defined in \eqref{1.3}. Now Lemma \ref{lem2} implies that $F_2(x,q) \geq 0$ in this case.

   Subcase 3. $(1+s)x^s-(2-s) \leq 0, 1-s^2 \leq (r-1)(r-2)$ and $1/2 \leq q_0  \leq q<1$, where $q_0$ is defined by
\begin{align}
\label{3.10}
   \frac {(1-s^2)q_0}{(r-1)(r-2)(1-q_0)}=1.
\end{align}

    In this case, Lemma \ref{lem4} with $q_2=q_0$ implies that \eqref{2.6} is satisfied by $\alpha_1=0$ and
    $\alpha_2=s$, where we set $a=q_0$ and $b=1$ in Lemma \ref{lem5}. It follows from Lemma \ref{lem5} that $F_2(x,q) \geq 0$ as long as $c_3(r,s) \leq 0$, where $c_3(r,s)$ is given in \eqref{1.3}. As Lemma \ref{lem2} implies that $c_3(r,s) \leq 0$, we see that $F_2(x,q) \geq 0$ in this case.

   Subcase 4. $(1+s)x^s-(2-s) \leq 0, 1-s^2 \leq (r-1)(r-2)$ and $1/2 \leq q < q_0$, where $q_0$ is defined by \eqref{3.10}.

   In this case, we set $a=1/2$ and $b=q_0$ in Lemma \ref{lem5}. Note that as $r \leq 3-s$, it follows from this and \eqref{3.10} that when $s \geq -1/2$,
\begin{align*}
   \frac {q_0}{1-q_0} \leq \frac {(2-s)(1-s)}{(1-s^2)} \leq 5.
\end{align*}
    We then deduce that $q_0 \leq 5/6$. Note also that we have  $x^{-s} \geq (1+s)/(2-s) \geq 1/5$ when $s \geq -1/2$.
  Thus, we can take $q_1=5/6$, $x_0=1/5$ in Lemma \ref{lem3} and $q_2=1/2$ in Lemma \ref{lem4} to see that \eqref{2.6} is satisfied by $\alpha_1=-0.0889s, \alpha_2=s(1-s^2)/((r-1)(r-2))$.
    It follows from Lemma \ref{lem5} that $F_2(x,q) \geq 0$ as long as $c_4(r,s) \leq 0$, where $c_4(r,s)$ is given in \eqref{1.3} and  $F_2(x,q) \geq 0$ in this case again follows from Lemma \ref{lem2}.

%%----------------------------------------------------------------------------
\section{Further Discussions}
\label{sec 4} \setcounter{equation}{0}
%%----------------------------------------------------------------------------
    We point out that Theorem \ref{thm2} determines all the pairs $(r,s), r>s$ such that the right-hand side inequality of \eqref{1.5'} holds and all the pairs $(r,s), -1/2 \leq s \leq 1$ such that the left-hand side inequality of \eqref{1.5'} holds. However, less is known for the left-hand side inequality of \eqref{1.5'} when $r>s>1$ or $s<-1/2$. This is partially due to our approach in the proof of Theorem \ref{thm2} relies on showing $F_1(x,q) \leq 0$ (via $F_2(x,q) \geq 0$ ) for $0<x\leq 1, 0<q<1$, where $F_1, F_2$ are defined in \eqref{3.1} and \eqref{3.2}. However, it is easy to see that $F_1(0, q) >0$ when $r>s>1$ and $\displaystyle \lim_{x \rightarrow 0^+}F_2(x,q) < 0$ when $r>2, s<-1$. It also follows from this that in order to show $F_2(x,q) \geq 0$ when $s<-1$, we must have $r \leq 2$. As Lemma \ref{lem1} implies a necessary condition for the left-hand side inequality of \eqref{1.5'} to hold is $r \geq 1,0 \leq r+s \leq 3$, we then deduce that when $s \leq -1$, one can only expect to show $F_2(x,q) \geq 0$ for $1 \leq r \leq 2, s \geq -r \geq -2$.

   On the other hand, though Theorem \ref{thm2} only establishes the validity of the left-hand side inequality of \eqref{1.5'} for $s \geq -1/2$, one can in fact extend the validity of the left-hand side inequality of \eqref{1.5'} for certain $r>s, s <-1/2$ by going through the proof of Theorem \ref{thm2}. This is given in the following
\begin{theorem}
\label{thm3}
  Let $r>s$ and $x_1=\min \{ x_i \}, x_n=\max \{ x_i \}$. The left-hand side inequality of \eqref{1.5'} holds when $(r-1)(r-2) \leq 1-s^2$ or when $-1 < s <-1/2, 2<r<3-s, \displaystyle \max_{1 \leq i \leq 4} \{ c_i(r,s) \} \leq 0$, where $c_i(r,s), 1 \leq i \leq 4$ is defined in \eqref{1.3}. Moreover, in all these cases we have equality holding if and only if $x_1=x_2=\cdots=x_n$.
\end{theorem}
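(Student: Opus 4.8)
The plan is to go through the proof of Theorem \ref{thm2} once more, recording precisely which hypotheses on $(r,s)$ are used at each step and checking that they survive under the weaker assumptions here. The first point is that the reduction of the left-hand side inequality of \eqref{1.5'} to the single two-variable estimate ``$F_2(x,q)\ge 0$ for $0<x\le 1$, $0<q<1$'' (with $F_2$ as in \eqref{3.2}) uses nothing beyond $r>s$: one descends from general $n$ to $n=2$ by the Lagrange-multiplier/root-counting argument (otherwise $f_1$ would have a third positive root), and then invokes $F_1(1,q)=0$ together with $\partial F_1/\partial x=(1-q)F_2$. So, under either hypothesis of Theorem \ref{thm3}, it suffices to prove $F_2\ge 0$ on $(0,1]\times(0,1)$.

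For the hypothesis $-1<s<-1/2$, $2<r<3-s$ and $\max_{1\le i\le 4}\{c_i(r,s)\}\le 0$, one repeats Case 3 of the left-hand side argument in the proof of Theorem \ref{thm2} verbatim, the only change being that each appeal to Lemma \ref{lem2} (that the $c_i$ are non-positive) is replaced by the present assumption. The subcase analysis --- on whether $q\le 1/2$, on the sign of $(1+s)x^s-(2-s)$, and on the sign of $1-s^2-(r-1)(r-2)$ --- is unchanged; Lemmas \ref{lem3} and \ref{lem4} furnish the estimates \eqref{2.6} with the exponents $(\alpha_1,\alpha_2)$ built into $c_3$ and $c_4$, and Lemma \ref{lem5} then turns $c_i\le 0$ into $F_2\ge 0$. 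One must only recheck that the numerical choices made when invoking Lemmas \ref{lem3} and \ref{lem4} (in particular $q_1=5/6$, $x_0=1/5$, which rested on $q_0\le 5/6$ and $x^{-s}\ge(1+s)/(2-s)\ge 1/5$) remain legitimate for $s$ a little below $-1/2$; this is the one place the restriction $s\ge -1/2$ entered the proof of Theorem \ref{thm2}, and it is exactly why the present statement is confined to $-1<s<-1/2$.

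For the hypothesis $(r-1)(r-2)\le 1-s^2$: by Lemma \ref{lem0} and Theorem \ref{thm2} the inequality already holds for $1\le r\le 2$, $-1\le s\le 1$ and for $-1/2\le s\le 1$, while $(r-1)(r-2)\le 1-s^2$ forces $|s|<1$ and, for $s\le 0$, also $r<(3+\sqrt 5)/2<3-s$ (so $0\le r+s<3$); hence the only genuinely new range is $r>2$ with $-1<s<-1/2$. There $1-s^2\ge(r-1)(r-2)$ is precisely the situation in which Lemma \ref{lem4} --- and so Subcases 3 and 4 of Case 3 --- never comes into play: one checks that it yields $(1-s)(q(1+s)x^s+(2-s)(1-q))\ge(r-1)(r-2)\ge(r-1)(-q(r+1)x^r+(r-2)(1-q))$ for all $0<q<1$, $0<x\le 1$, hence that \eqref{3.3} holds at every critical point of $F_2$; feeding \eqref{3.3} through the arithmetic--geometric mean inequality --- as in Subcase 2 and in Case 2 of the proof of Theorem \ref{thm2}, but now with the extra slack provided by $1-s^2\ge(r-1)(r-2)$ --- reduces $F_2\ge 0$ to the non-positivity of an explicit polynomial in $r,s$ on the region $\{r>2,\ (r-1)(r-2)\le 1-s^2\}$, which is checked by the convexity-in-$r$ device of Lemma \ref{lem2}.

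The main obstacle, in both cases, is this last step: reducing everything to the non-positivity of finitely many explicit polynomials in $r$ and $s$ and then verifying it on the prescribed region. These checks are elementary but tight --- near the curve $(r-1)(r-2)=1-s^2$ the lower bounds obtained for $F_2$ carry very little slack --- and, as the author's reliance on an \emph{assumed} bound $\max c_i\le 0$ in the second hypothesis indicates, the method does not extend uniformly as $s$ decreases toward $-1$; delimiting the region on which the auxiliary choices in Lemmas \ref{lem3}, \ref{lem4} and \ref{lem5} can still be made, while keeping the relevant polynomials non-positive, is where the real work lies.
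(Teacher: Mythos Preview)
For the second hypothesis your plan coincides with the paper's: rerun the four subcases of Case~3, replacing each appeal to Lemma~\ref{lem2} by the standing assumption $\max_i c_i(r,s)\le 0$. One correction: the numerical choices $q_1=5/6$, $x_0=1/5$ do \emph{not} remain legitimate once $s<-1/2$; already at $s=-0.6$ one has $(1+s)/(2-s)=2/13<1/5$, and $q_0$ can exceed $5/6$ since $q_0/(1-q_0)=(r-1)(r-2)/(1-s^2)$ may reach $(2-s)/(1+s)=6.5$. The paper's one-line proof (``simply combining the arguments in all the subcases of case~3'') glosses over exactly the same point, so on this hypothesis you are no worse off than the paper.

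The genuine gap is in your treatment of the first hypothesis $(r-1)(r-2)\le 1-s^2$. The paper does \emph{not} invoke critical points, \eqref{3.3}, or any of the $c_i$ here. It simply drops the two $q$-dependent factors in $F_2$ --- both are $\ge 1$ when $r>1$, $s<0$, $0<x\le1$ --- and applies weighted AM--GM directly:
\[
\frac{F_2(x,q)}{r-s}\ \ge\ \frac{r-1}{r-s}\,x^{r-2}+\frac{1-s}{r-s}\,x^{-1-s}-1\ \ge\ x^{((r-1)(r-2)-(1-s^2))/(r-s)}-1\ \ge\ 0.
\]
Your proposed route through Subcase~2 instead bounds the first summand below by $x^{-1-2s}$ (via the strengthened \eqref{3.3} at critical points), and AM--GM then reduces the question to $c_2(r,s)\le 0$. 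But $c_2(r,s)=(r-1)(-1-2s)-(1-s^2)$ is \emph{positive} on part of the region in question: at $(r,s)=(2.1,-0.9)$ one has $(r-1)(r-2)=0.11\le 0.19=1-s^2$ while $c_2=0.69>0$. The point is that for $r<1-2s$ the crude bound $x^{r-2}$ is actually sharper than the critical-point bound $x^{-1-2s}$, so the Subcase~2 machinery throws away exactly the slack you need. The two-line direct estimate above --- which you do not mention --- is what makes this case work.
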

\begin{proof}
   Once again we omit the discussions on the conditions of equality. As in the proof of Theorem  \ref{thm2}, it suffices to prove $F_2(x,q) \geq 0$, where $F_2(x,q)$ is defined in \eqref{3.2}. When $(r-1)(r-2) \leq 1-s^2$, it follows from the expression for $F_2(x,q)/(r-s)$ in \eqref{2.5} that
\begin{align*}
 \frac {F_2(x,q)}{r-s} \geq   \frac {r-1}{r-s}x^{r-2}+\frac {1-s}{r-s}x^{-1-s} -1 \geq   x^{\frac {(r-1)(r-2)-(1-s^2)}{r-s}}-1 \geq 0.
\end{align*}
  When $-1< s <-1/2$, our assertion follows by simply combining the arguments in all the subcases of case 3 in the proof of the left-hand side inequality of \eqref{1.5'} in Section \ref{sec 3}. This completes the proof.
\end{proof}

%%--------------------------------------------------------------------------------------------
%%---------------------------------------------------------------------------------------------

%%-----------------------------------------------------------------------

%%------------------------------------------------------------------------

\end{document}